\DeclareMathOperator{\End}{End}
\DeclareMathOperator{\Diff}{\zap{Diff}}
\def\cyr{%
\renewcommand\rmdefault{wncyr}%
\renewcommand\sfdefault{wncyss}%
\renewcommand\encodingdefault{OT2}%
\normalfont
\selectfont}
\DeclareMathAlphabet{\zap}{OT1}{pzc}{m}{it}
\DeclareTextFontCommand{\textcyr}{\cyr}
\def\be{\begin{equation}}
\def\ee{\end{equation}}
\def\bea{\begin{eqnarray*}}
\def\eea{\end{eqnarray*}}
\def\CC{\mathbb C}
\newtheorem{main}{Theorem}
\newtheorem{thm}{Theorem}
\newtheorem{lem}{Lemma}
\newtheorem{prop}{Proposition}
\newtheorem{cor}{Corollary}
\def\RR{{\mathbb R}}
\def\CP{{\mathbb C \mathbb P}}
\begin{document}

\title{Einstein Metrics, Harmonic Forms,\\ and Symplectic Four-Manifolds}

\author{Claude LeBrun\\Stony Brook University}

\date{}
\maketitle

\begin{abstract}
If $M$ is the underlying smooth oriented $4$-manifold of a Del Pezzo surface, we  consider the 
set of Riemannian metrics $h$ on $M$ such that $W^+(\omega , \omega )> 0$,
where $W^+$ is the self-dual Weyl curvature of $h$, and $\omega$ is a non-trivial 
 self-dual harmonic $2$-form on $(M,h)$. While  this open region in the space of
Riemannian metrics  contains all the known Einstein metrics on $M$, we show that it contains  no others. Consequently, it 
 contributes exactly one connected component to the moduli space of Einstein metrics on $M$. 
\end{abstract}

\section{Introduction}

Given a smooth compact $4$-manifold $M$, one would like to completely understand the  moduli
space $\mathscr{E}(M)$ of the Einstein metrics  it carries. Recall that  an {\em Einstein metric} \cite{bes} means a Riemannian metric $h$ 
which has  constant Ricci curvature, in the sense that it solves the Einstein equation
$$r= \lambda h$$
where $r$ is the Ricci tensor of $h$ and  $\lambda$ is a real number, called the {\em Einstein constant}
of $h$. The {\em moduli space} $\mathscr{E}(M)$ of Einstein metrics on $M$ is by definition the quotient of the set of Einstein metrics by 
the action of the group $\Diff (M) \times \RR^\times$ of self-diffeomorphisms and constant rescalings. For simplicity, 
we may  give $\mathscr{E}(M)$ the quotient topology 
induced by  the $C^{\infty}$-topology on the space of smooth metric tensors; however,  
it is worth noting that, for reasons of  elliptic regularity \cite{det-kaz},   this coincides \cite{andmod} with the metric topology induced by the Gromov-Hausdorff distance
between unit-volume Einstein metrics.

While our understanding of this problem is rather limited for general $4$-manifolds, there are specific cases where our knowledge 
is quite complete. In particular, if $M$ is the $4$-torus, or $K3$, or a compact real or complex-hyperbolic $4$-manifold, 
the Einstein moduli space $\mathscr{E}(M)$  is known  to be {\em connected} \cite{bes,bcg,lmo}. This should be contrasted with  the pattern
that predominates   in 
higher dimensions, where Einstein moduli spaces  are typically  disconnected, and  indeed often have infinitely many 
connected components \cite{bohm,bgk,cvc}.

This article will explore  related uniqueness 
   questions for 
   Einstein metrics on  the small but important  class of smooth compact  $4$-manifolds  that arise  as {\em Del Pezzo surfaces}. 
  These 
$4$-manifolds are characterized \cite{chenlebweb} by  two properties:   they admit Einstein metrics with $\lambda > 0$, and they also admit symplectic structures. 
 Up to diffeomorphism, there are
exactly ten such manifolds, namely  $S^2 \times S^2$ and the nine connected sums $\CP_2\# m\overline{\CP}_2$,  $m = 0, 1, \ldots, 8$. 
The known Einstein metrics on these spaces all have $\lambda > 0$, and our objective here is to completely characterize these known Einstein metrics by 
a curvature condition. To this end, notice that, with their standard orientations, each of these $4$-manifolds $M$ has  $b_+(M)=1$. This is equivalent to saying that, 
for any Riemann metric $h$ on any of these compact oriented $4$-manifolds, there is, up to an overall multiplicative constant, a unique non-trivial self-dual harmonic 
$2$-form $\omega$. We can therefore consider those Riemannian metrics $h$ on $M$ which satisfy the curvature inequality
\begin{equation}
\label{rosetta}
W^+(\omega , \omega ) > 0
\end{equation}
at every point of $M$, where $W^+$ is the self-dual Weyl tensor of $h$. Note that, whenever  $b_+(M)=1$, this  condition only depends on the metric $h$, since 
the  harmonic form $\omega$ is then uniquely determined up to a non-zero  multiplicative constant. 
Our first main result is the following: 

\begin{main} 
\label{gemstone}
Let $(M,h)$ be a compact oriented Einstein $4$-manifold with $b_+=1$, and suppose that  condition  \eqref{rosetta} holds 
at every point of $M$. Then $M$ is diffeomorphic to a Del Pezzo surface, in such a way that   $h$ becomes 
\begin{itemize}
\item a K\"ahler-Einstein metric with $\lambda > 0$; or 
\item  a constant multiple of the Page metric on $\mathbb{CP}_2\# \overline{\mathbb{CP}}_2$; or 
\item a constant multiple of 
 the Chen-LeBrun-Weber metric on  $\mathbb{CP}_2\# 2\overline{\mathbb{CP}}_2$.
 \end{itemize}
 Conversely, every metric on this list satisfies \eqref{rosetta} at every point. 
 \end{main}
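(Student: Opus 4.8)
My plan is to prove the two directions separately, beginning with the converse, which is the more elementary half. Every metric on the list is Kähler, or conformal to a Kähler metric, for a complex structure realizing $M$ as a Del Pezzo surface. On any Kähler surface the Kähler form $\omega$ is self-dual and harmonic, and it is an eigenform of the self-dual Weyl tensor with eigenvalue $s/6$; hence $W^+(\omega,\omega)=\tfrac{s}{6}\,|\omega|^2$. For a Kähler–Einstein metric with $\lambda>0$ one has $s=4\lambda>0$, so \eqref{rosetta} holds verbatim. Because $b_+=1$ makes $\omega$ the only self-dual harmonic form up to scale, and because $W^+$ is conformally invariant as a trace-free endomorphism of $\Lambda^+$ --- so that the \emph{sign} of $W^+(\omega,\omega)$ depends only on the conformal class --- the same computation covers the Page and Chen–LeBrun–Weber metrics: each is conformal to an extremal Kähler metric of positive scalar curvature, and \eqref{rosetta} again reduces to $s>0$ for that Kähler representative. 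This establishes the last sentence of the theorem.

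For the forward direction I would proceed in three moves. First, since $W^+(\omega,\omega)>0$ everywhere, $\omega$ is nowhere zero; as a self-dual $2$-form it then satisfies $\omega\wedge\omega=|\omega|^2\,dV>0$ pointwise, so $\omega$ is nondegenerate, and being harmonic it is closed. Thus $\omega$ is a symplectic form and $(M,\omega)$ is a symplectic $4$-manifold. Second, since self-duality and harmonicity of a $2$-form are conformally invariant in dimension four, the rescaled metric $\hat h = |\omega|_h\, h$ retains $\omega$ as a self-dual harmonic form, now of constant pointwise norm; together with $\hat h$ this $\omega$ defines a compatible almost-complex structure $J$, making $(M,\hat h,\omega,J)$ almost-Kähler, and by the conformal invariance noted above \eqref{rosetta} persists.

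The crux is to upgrade ``almost-Kähler in the conformal class'' to ``conformal to Kähler.'' Here the Einstein condition on $h$ is essential: by the second Bianchi identity $W^+$ is a harmonic section, so its top eigenvalue $w$ obeys an elliptic equation (Derdzinski's Weitzenböck formula). Combining this with the almost-Kähler geometry of $\hat h$ and with the integrated Weitzenböck identity for the harmonic form,
\[
\int_M\Big(2\,W^+(\omega,\omega)-\tfrac{s}{3}\,|\omega|^2\Big)\,dV=\int_M|\nabla\omega|^2\,dV,
\]
I expect to show that at every point $\omega$ spans the eigenspace of $W^+$ for its largest eigenvalue and that this eigenvalue is simple. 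Derdzinski's theorem then applies: a suitable power of the top eigenvalue of $W^+$ conformally rescales the Einstein metric $h$ to a Kähler metric $\tilde h$, with $\omega$ proportional to the Kähler form of $\tilde h$. I regard this implication --- Einstein together with \eqref{rosetta} forces $h$ to be conformally Kähler --- as the principal obstacle, since it is exactly the point at which the curvature hypotheses must be converted into integrability of $J$.

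Finally, with $h$ known to be conformally Kähler and Einstein, the positivity in \eqref{rosetta} gives $W^+_{\tilde h}(\omega,\omega)=\tfrac{s_{\tilde h}}{6}|\omega|^2>0$, so the Kähler representative $\tilde h$ has positive scalar curvature. The classification of compact conformally Kähler, Einstein $4$-manifolds now finishes the argument: Derdzinski's analysis identifies $\tilde h$ as an extremal Kähler metric, and the uniqueness results of Chen–LeBrun–Weber \cite{chenlebweb} (together with the corresponding uniqueness for the Page metric) show that $h$ is either Kähler–Einstein with $\lambda>0$, or a constant multiple of the Page metric on $\CP_2\#\overline{\CP}_2$, or a constant multiple of the Chen–LeBrun–Weber metric on $\CP_2\#2\overline{\CP}_2$. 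In each case $M$ carries both a symplectic structure and an Einstein metric with $\lambda>0$, so $M$ is a Del Pezzo surface \cite{chenlebweb}, completing the proof.
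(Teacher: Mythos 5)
Your converse direction is fine, and agrees with the paper's (implicit) argument: each listed metric is conformal to a K\"ahler metric of positive scalar curvature, for which $W^+(\omega,\omega)=\frac{s}{6}|\omega|^2>0$, and the sign of $W^+(\omega,\omega)$ is a conformal invariant. The first two moves of your forward direction (nondegeneracy of $\omega$, conformal rescaling to an almost-K\"ahler metric) also match the paper. But there is a genuine gap at exactly the point you yourself flag as ``the principal obstacle'': you never prove that Einstein plus \eqref{rosetta} forces $h$ to be conformally K\"ahler; you only say you ``expect to show'' that $\omega$ is everywhere a simple top eigenvector of $W^+$, so that Derdzi\'nski's theorem applies. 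That implication is not a routine verification --- it is the entire content of the paper's \S\ref{torso} --- and the pointwise route you sketch is problematic: the hypothesis $W^+(\omega,\omega)>0$ by itself says nothing about $\omega$ being an eigenvector of $W^+$, and Derdzi\'nski's theorem moreover requires the spectrum of $W^+$ to be degenerate (at most two distinct eigenvalues) at every point, which is strictly more than ``the top eigenvalue is simple with eigenvector $\omega$''; neither statement follows pointwise from the hypotheses.

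What the paper actually does is global, not pointwise. Writing $h=f^2g$ with $g$ almost-K\"ahler, the Einstein condition gives $\delta_h W^+=0$, which for $g$ becomes $\delta_g(fW^+)=0$ --- note the unexpected conformal weight, which your sketch does not engage --- and hence the Weitzenb\"ock formula $0=\nabla^*\nabla(fW^+)+\frac{s}{2}fW^+-6fW^+\circ W^++2f|W^+|^2I$. Pairing this with $\omega\otimes\omega$, integrating by parts, and combining an algebraic identity for $\langle W^+,\nabla^*\nabla(\omega\otimes\omega)\rangle$ (valid on any almost-K\"ahler $4$-manifold) with the pointwise inequality $|W^+|^2-\frac{1}{2}|W^+(\omega)^\perp|^2\geq\frac{3}{8}\left[W^+(\omega,\omega)\right]^2$, one arrives at $0\geq\int_M W^+(\omega,\omega)\,|\nabla\omega|^2 f\,d\mu$. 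Since $W^+(\omega,\omega)>0$ everywhere, this forces $\nabla\omega\equiv 0$, i.e.\ the almost-K\"ahler metric is already K\"ahler: integrability is obtained directly, with no appeal to a degenerate-spectrum theorem. From there your endgame (positive scalar curvature of the K\"ahler representative, then the classification of conformally K\"ahler Einstein metrics in \cite{lebuniq}) is essentially the paper's. To repair your proposal you must supply this integral Weitzenb\"ock argument, or an equivalent substitute, for the step you deferred.
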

 
 The proof of this result is given in \S \ref{torso} below, and proceeds by proving that the given Einstein metric
 must be conformally K\"ahler.  Our method makes strong use of the fact that the second Bianchi identity 
 implies that the self-dual Weyl curvature $W^+$ of an oriented 
 $4$-dimensional Einstein manifold  is  {\em harmonic}  as a bundle-valued $2$-form. In fact, 
the proof does not really require the assumption that $b_+(M)=1$; it suffices to  assume that there is 
 a harmonic self-dual $2$-form $\omega$ on $(M,h)$ such that \eqref{rosetta} holds at every point. 
 
 While 
 the inequality \eqref{rosetta}
 may have a somewhat unfamiliar flavor, it 
  is interestingly related to the positivity of scalar curvature. Indeed, any harmonic self-dual $2$-form $\omega$
 satisfies the Weitzenb\"ock formula 
 $$\frac{1}{2}\Delta |\omega|^2 + |\nabla \omega |^2  + \frac{s}{3} |\omega |^2 = 2 W^+(\omega , \omega ), $$
and it therefore follows that any metric of  scalar curvature $s > 0$ must at least satisfy \eqref{rosetta} at {\em some} points of $M$. Thus, while 
Theorem \ref{gemstone} does not  provide  a complete classification of  $\lambda > 0$ Einstein metrics on Del Pezzo surfaces, it does represent  a  step
in that direction. 

However,  our method requires  \eqref{rosetta} to hold {\em everywhere}, rather than 
just at certain points. This is a strong condition, because it  guarantees that  the closed self-dual  $2$-form $\omega$ is nowhere zero,  and
therefore  implies  that  $(M, \omega )$ is a symplectic manifold. If 
 $h$  is a Riemannian metric on a smooth compact oriented $4$-manifold $M$ with $b_+(M)\neq 0$, 
 we will thus say that $h$ is of {\em symplectic type} if there is a self-dual harmonic $2$-form 
 on $(M,h)$ such that $\omega \neq 0$ at every point of $M$. 
 This is actually a conformally invariant condition; if $h$ is of symplectic type, and if
 $u$ is a smooth positive function, then $uh$ is also of symplectic type.  
 For this reason, it is also natural to say  that the conformal class $[h]$ is of symplectic type if 
 there is a self-dual harmonic $2$-form on $(M, [h])$ which is everywhere non-zero. 
 This is an {\em open condition}
on $[h]$, in the sense that  the set of conformal classes of  symplectic type is automatically open \cite{lcp2} in the $C^2$ topology.

 Condition \eqref{rosetta} is also conformally invariant. Namely, if we replace  
 $h$ with $u h$ for
some positive function $u$, then $W^+(\omega, \omega)$ is replaced with $u^{-3}W^+(\omega, \omega)$,
thereby leaving the  the sign of $W^+(\omega, \omega)$ unaltered at any given point.  We will henceforth say that  the conformal class $[h]$  is  of 
{\em positive symplectic type} if, for some choice of $h$-compatible self-dual  harmonic $2$-form $\omega$, condition 
\eqref{rosetta} holds everywhere on $M$. This obviously implies that   $\omega \neq 0$ everywhere, so positive symplectic type implies  symplectic type. 
The condition of positive symplectic type is once again  open in the $C^2$ topology. 

With these concepts in place, we are now ready to formulate our other main result, which is a direct consequence of 
 Theorem \ref{gemstone}: 

\begin{main} \label{lodestone}
Let $M$ be the underlying smooth compact $4$-manifold of a Del Pezzo surface. Let $\mathscr{E}(M)$ denote the 
the moduli space of  Einstein metrics $h$ on $M$, and let $\mathscr{E}_\omega^+(M) \subset \mathscr{E}(M)$ be the open subset arising from   
Einstein metrics $h$ for which the corresponding  
  conformal classes $[h]$  are of  positive symplectic type. Then
$\mathscr{E}_\omega^+(M)$ is connected. 
Moreover, if $b_2(M) \leq 5$, then $\mathscr{E}_\omega^+(M)$ exactly consists of a single point. 
\end{main}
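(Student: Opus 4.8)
The plan is to deduce Theorem~\ref{lodestone} from Theorem~\ref{gemstone} by passing through the known uniqueness theory for the three families of metrics that the latter permits. Since the hypotheses of Theorem~\ref{gemstone} are met by precisely those Einstein metrics $h$ whose conformal class $[h]$ is of positive symplectic type, every point of $\mathscr{E}_\omega^+(M)$ is represented, up to rescaling, either by a K\"ahler--Einstein metric with $\lambda>0$, by the Page metric on $\CP_2\#\overline{\CP}_2$, or by the Chen--LeBrun--Weber metric on $\CP_2\#2\overline{\CP}_2$; and the converse half of Theorem~\ref{gemstone} guarantees that each such metric does indeed lie in $\mathscr{E}_\omega^+(M)$. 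The first step is therefore to understand these three strata separately and then to glue them inside the $\Diff(M)\times\RR^\times$ quotient.

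The Page and Chen--LeBrun--Weber metrics are the easy cases: each is known to be unique, up to scale and diffeomorphism, on its respective manifold, and these two manifolds carry no K\"ahler--Einstein metric at all, this being the classical obstruction that singles out $\CP_2\#\overline{\CP}_2$ and $\CP_2\#2\overline{\CP}_2$ among the ten Del Pezzo surfaces. Hence for $M=\CP_2\#\overline{\CP}_2$ and $M=\CP_2\#2\overline{\CP}_2$ the set $\mathscr{E}_\omega^+(M)$ is a single point, settling those manifolds in both assertions.

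For the K\"ahler--Einstein stratum the main input is the Bando--Mabuchi uniqueness theorem: for a fixed complex structure $J$ making $(M,J)$ a Del Pezzo surface, the K\"ahler--Einstein metric with $\lambda>0$ is unique up to the action of $\Aut(M,J)$ and rescaling. I would combine this with the fact that the complex structures turning a fixed $M$ into a Del Pezzo surface of given degree form a connected family --- for the blow-ups $\CP_2\#m\overline{\CP}_2$ this family is parametrized by configurations of $m$ points of $\CP_2$ in general position modulo $PGL(3,\CC)$, which is connected, while $\CP_2$ and $S^2\times S^2$ carry rigid complex structures --- together with the continuous dependence of the normalized K\"ahler--Einstein metric on $J$. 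These show that the K\"ahler--Einstein metrics project to a single connected subset of $\mathscr{E}(M)$, and hence, after adjoining the isolated Page and Chen--LeBrun--Weber points in the two exceptional cases, that $\mathscr{E}_\omega^+(M)$ is connected for every Del Pezzo $M$. The sharper single-point conclusion for $b_2(M)\le 5$ then follows because in exactly this range the relevant family of complex structures is a single isomorphism class: $\CP_2$ ($b_2=1$) and $S^2\times S^2$ ($b_2=2$) are complex-rigid, the degree-$6$ and degree-$5$ surfaces $\CP_2\#3\overline{\CP}_2$ ($b_2=4$) and $\CP_2\#4\overline{\CP}_2$ ($b_2=5$) are unique up to biholomorphism since $PGL(3,\CC)$ acts transitively on configurations of at most four points in general position, and the Page and Chen--LeBrun--Weber cases ($b_2=2,3$) were already seen to be single points. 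The threshold is genuine, since for $m\ge 5$, i.e. $b_2\ge 6$, the point configurations acquire positive-dimensional moduli, so only connectedness survives.

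I expect the principal obstacle to lie in this last stratum, namely in rigorously matching the classification of Theorem~\ref{gemstone} with Bando--Mabuchi uniqueness inside the diffeomorphism-and-scaling quotient: one must verify that passage between the various complex structures --- including complex conjugates and the $PGL(3,\CC)$-reductions invoked above --- is realized by honest orientation-preserving diffeomorphisms of $M$, and that the continuity of the K\"ahler--Einstein metric in $J$ descends correctly to $\mathscr{E}(M)$, so that the connected family of complex structures maps onto a connected, and when appropriate single, subset of the moduli space.
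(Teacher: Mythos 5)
Your proposal is correct and is essentially the paper's own argument: both reduce, via Theorem \ref{gemstone}, to the uniqueness of the Einstein representative attached to each Del Pezzo complex structure, and then quote the connectedness (and, for $b_2\le 5$, the rigidity) of the family of Del Pezzo complex structures on a fixed $M$, established exactly as you do by moving point configurations with projective linear transformations and noting that the excluded configurations have real codimension two. The one organizational difference lies in the uniqueness input: you stratify, handling $\CP_2\#\overline{\CP}_2$ and $\CP_2\#2\overline{\CP}_2$ by the Matsushima-type obstruction together with the explicit Page/Chen--LeBrun--Weber alternative already built into Theorem \ref{gemstone}, and the remaining manifolds by Bando--Mabuchi, whereas the paper invokes a single citation (\cite{lebuniq}) asserting that each Del Pezzo complex structure carries exactly one conformally K\"ahler Einstein metric up to complex automorphisms and rescaling --- a statement whose K\"ahler--Einstein case rests on Bando--Mabuchi anyway, so the mathematical content coincides. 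Finally, the continuity in $J$ of the normalized Einstein metric, which you rightly flag as the point needing verification for the connectedness assertion when $b_2\ge 6$, is not spelled out in the paper either: it is implicit in the unargued claim that $\mathscr{E}_\omega^+(M)$ ``can be identified'' with the moduli space of Del Pezzo complex structures, so your write-up is, if anything, more candid about this step; it can be justified by the implicit function theorem, since Del Pezzo surfaces with $b_2\ge 6$ carry no nonzero holomorphic vector fields, or by appealing to the moduli continuity results of \cite{sunspot}.
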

\begin{proof}
A  Del Pezzo surface is by definition a compact complex surface $(M^4, J)$ whose first Chern class is a K\"ahler class.  
As complex manifolds, the 
Del Pezzo surfaces are exactly $\CP_1 \times \CP_1$ and the blow-ups of $\CP_2$ at $m$ distinct points, $0\leq m \leq 8$, 
such that no three points are on a line, no six are on a conic, and no eight are on a nodal cubic with one of the given points at the  node \cite{delpezzo,cubic}. 
When $b_2 \leq 5$,    there is consequently, up to biholomorphism,  only one Del Pezzo complex structure for each diffeotype, since  we can simultaneously move up to 
four generically located  points in the projective plane to standard positions via a suitable projective linear transformation. 
For larger values of $b_2$, the choice of complex structure instead essentially depends on $2b_2 - 10$ complex parameters; however,  the various possibilities still 
form a single connected family, since the set of prohibited configurations of $m=b_2 - 1$ points in $\CP_2$ is a finite union of complex hypersurfaces, and so has 
  real codimension $2$.

 Given a Del Pezzo surface $(M, J)$ with  fixed complex structure, there is always a $\lambda > 0$  Einstein metric  $h$ 
 which can be written as $h=s^{-2}g$ for a  $J$-compatible  extremal K\"ahler metric $g$ with scalar curvature $s> 0$. 
  In most cases, one can simply take $h=g$, so that $h$ is a K\"ahler-Einstein metric. By a result of  
 Tian \cite{sunspot,tian},  a Del Pezzo surface $(M,J)$ admits a $J$-compatible K\"ahler-Einstein metric iff it has reductive automorphism group.
This excludes only  two  Del Pezzo surfaces, namely 
 the ones diffeomorphic to $\CP_2 \#  \overline{\CP}_2$ and $\CP_2 \# 2 \overline{\CP}_2$; 
  these two do not admit K\"ahler-Einstein metrics, but they  nonetheless admit   conformally K\"ahler, $\lambda > 0$ Einstein metrics,  known  as the Page and Chen-LeBrun-Weber metrics \cite{chenlebweb,derd,page}, respectively. 
In    \cite{lebuniq},  it was then shown 
that any conformally K\"ahler,  Einstein metric on a compact complex surface is  either K\"ahler-Einstein, or else
is isometric to a constant multiple of one of these two special metrics.
In particular, up to complex automorphisms and constant rescalings, there is exactly one conformally K\"ahler, Einstein metric
for  each Del Pezzo complex structure.  Theorem \ref{gemstone} therefore tells us that the  moduli space $\mathscr{E}_\omega^+(M)$ 
can be identified with the moduli space of Del Pezzo complex structures. Since we have seen that the latter is always  pathwise connected,
and   moreover consists of a single point when $b_2 (M) \leq 5$, the claim follows. \end{proof}

We  now conclude this introduction with a  consequence of Theorem \ref{lodestone}: 

\begin{cor}
For any Del Pezzo surface $M$, $\mathscr{E}_\omega^+(M)$ is exactly a connected component of $\mathscr{E}(M)$.
\end{cor}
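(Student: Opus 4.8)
The plan is to deduce the Corollary from Theorem~\ref{lodestone} by checking that $\mathscr{E}_\omega^+(M)$ is at once connected, open, and closed in $\mathscr{E}(M)$: a nonempty subset of a topological space that is connected and also both open and closed is automatically one of its connected components. Connectedness is precisely Theorem~\ref{lodestone}. Openness is inherited from the ambient space of conformal classes, where, as noted above, positive symplectic type is an open condition in the $C^2$ topology; since this condition is invariant under $\Diff(M)\times\RR^\times$, the set of Einstein metrics realizing it is open and saturated, so its image $\mathscr{E}_\omega^+(M)$ is open in the quotient $\mathscr{E}(M)$. When $b_2(M)\leq 5$ the set $\mathscr{E}_\omega^+(M)$ is a single point, which is trivially closed, so in that range the claim is already complete; the work is thus entirely in establishing closedness for $b_2(M)\geq 6$.

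For closedness I would argue sequentially. Let $[h_j]\in\mathscr{E}_\omega^+(M)$ with $[h_j]\to[h_\infty]$ in $\mathscr{E}(M)$, the point being to show $[h_\infty]\in\mathscr{E}_\omega^+(M)$. Because the moduli topology agrees with the Gromov--Hausdorff topology on unit-volume representatives and $h_\infty$ is, by hypothesis, a genuine smooth Einstein metric on $M$, the regularity theory for Einstein $4$-manifolds permits me to choose representatives, after applying diffeomorphisms and rescalings, for which $h_j\to h_\infty$ in $C^\infty$. Since $b_+(M)=1$, each $h_j$ has a self-dual harmonic form $\omega_j$ unique up to scale; normalizing by $\int_M\omega_j\wedge\omega_j=1$, which is legitimate because the class $[\omega_j]$ lies in the positive cone, I obtain $\omega_j\to\omega_\infty$ in $C^\infty$, where $\omega_\infty$ is the self-dual harmonic form of $h_\infty$ and is nonzero because its $L^2$-norm equals $1$. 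Passing to the limit in $W^+_j(\omega_j,\omega_j)>0$ then yields the non-strict inequality $W^+_\infty(\omega_\infty,\omega_\infty)\geq 0$ at every point of $M$.

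The crux --- and the step I expect to be the main obstacle --- is to promote this to the strict inequality required by positive symplectic type. Here I would invoke the observation stressed after Theorem~\ref{gemstone}, that the second Bianchi identity renders $W^+$ a harmonic bundle-valued $2$-form on any oriented Einstein $4$-manifold; combined with the Weitzenb\"ock formula for $\omega_\infty$, this should yield a differential inequality for the nonnegative function $W^+_\infty(\omega_\infty,\omega_\infty)$ to which the strong maximum principle applies, forcing it to be either strictly positive everywhere or identically zero. The identically-zero alternative is excluded by integrating the Weitzenb\"ock formula $\tfrac12\Delta|\omega_\infty|^2+|\nabla\omega_\infty|^2+\tfrac{s}{3}|\omega_\infty|^2=2W^+_\infty(\omega_\infty,\omega_\infty)$ over $M$: the right-hand side would vanish, forcing $\nabla\omega_\infty\equiv 0$ and $s\,|\omega_\infty|^2\equiv 0$, so that $\omega_\infty$ is either zero or a nonzero parallel self-dual form on a Ricci-flat manifold; the former contradicts $\|\omega_\infty\|_{L^2}=1$, while the latter would exhibit $M$ as Calabi--Yau, contradicting its Del Pezzo diffeotype. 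Hence $W^+_\infty(\omega_\infty,\omega_\infty)>0$ everywhere; in particular $\omega_\infty$ is nowhere zero, so $[h_\infty]$ is of positive symplectic type and lies in $\mathscr{E}_\omega^+(M)$. This proves closedness, and with it that $\mathscr{E}_\omega^+(M)$ is a connected component of $\mathscr{E}(M)$.
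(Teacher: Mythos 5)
Your connectedness and openness steps are exactly the paper's, and your reduction to sequential closedness for $b_2(M)\geq 6$ --- with $C^\infty$-convergent representatives and convergent $L^2$-normalized harmonic forms --- is sound. The gap is the step you yourself flag as the crux: the claim that harmonicity of $W^+$ plus the Weitzenb\"ock formula ``should yield'' a differential inequality for $u:=W^+_\infty(\omega_\infty,\omega_\infty)\geq 0$ to which the strong maximum principle applies. No such inequality is exhibited, and the natural computation does not produce one. Using the two Weitzenb\"ock formulas valid on an Einstein $4$-manifold, namely $\nabla^*\nabla W^+=-\tfrac{s}{2}W^++6\,W^+\!\circ W^+-2|W^+|^2I$ and $\nabla^*\nabla\omega=2W^+(\omega)-\tfrac{s}{3}\omega$, one finds
\begin{equation*}
\nabla^*\nabla u \;=\; -\tfrac{7s}{6}\,u \;+\;10|W^+(\omega)|^2\;-\;2|W^+|^2|\omega|^2\;-\;4(\nabla^aW^+)(\nabla_a\omega,\omega)\;-\;2W^+(\nabla_a\omega,\nabla^a\omega).
\end{equation*}
At a point where $\omega$ spans the kernel of $W^+=\operatorname{diag}(0,\mu,-\mu)$ one has $u=0$ and $W^+(\omega)=0$, yet $10|W^+(\omega)|^2-2|W^+|^2|\omega|^2=-8\mu^2<0$; neither this term nor the cross term in $\nabla W^+$ and $\nabla\omega$ is controlled by $u$ or $|\nabla u|$ near the zero set of $u$. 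So the right-hand side is not of the form $c\,u+b\cdot\nabla u$, and the strong maximum principle simply does not apply. The dichotomy you want --- $u>0$ everywhere or $u\equiv 0$ for an Einstein metric satisfying only $W^+(\omega,\omega)\geq 0$ --- is precisely the hard borderline problem that the strict hypothesis \eqref{rosetta} is designed to avoid, and it cannot be dispatched in a sentence.

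The paper's own closedness argument bypasses this analysis entirely, and you could have too: for $b_2(M)\geq 6$, Theorem \ref{gemstone} (together with the classification in \cite{lebuniq}) shows that every metric in $\mathscr{E}_\omega^+(M)$ is not merely conformally K\"ahler but actually K\"ahler--Einstein, since the Page and Chen--LeBrun--Weber metrics occur only on $\CP_2\#\overline{\CP}_2$ and $\CP_2\#2\overline{\CP}_2$; and carrying a parallel compatible almost-complex structure is a closed condition under $C^\infty$ convergence. The limit metric is therefore K\"ahler--Einstein; its Einstein constant is $\geq 0$ by continuity, the Ricci-flat case is excluded by exactly the Calabi--Yau argument you already gave, and hence $s>0$, which for a K\"ahler metric gives $W^+(\omega,\omega)=s/3>0$ everywhere. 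If you replace your maximum-principle paragraph by this appeal to Theorem \ref{gemstone}, your proof closes up and becomes essentially the paper's; as written, it has a hole at precisely the point where the analytic difficulty lies.
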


Indeed, it suffices to prove that the path-connected space $\mathscr{E}_\omega^+(M)$ is both open and closed in $\mathscr{E}(M)$.
Since $\mathscr{E}(M)$ has the quotient topology, the  fact that it is open follows from the fact that the set of metrics with positive symplectic conformal class 
is  open and invariant under the action of $\Diff (M) \times \RR^\times$.   On the other hand, it is also closed, because, except in cases where $\mathscr{E}_\omega^+(M)$
is now known to be  a single point,   the Einstein metrics in question are all
K\"ahler, and   requiring that a Riemannian metric  carry a parallel almost-complex structure is a closed condition.

\section{Harmonic Self-Dual Weyl Curvature} 
\label{torso}

Recall that we say that a conformal class $[h]$ on an a compact oriented $4$-manifold $M$ is of {\em symplectic type} if there is a harmonic
self-dual $2$-form $\omega$ on $(M,h)$ such that $\omega \neq 0$ everywhere on $M$. This is indeed a conformally invariant condition, because 
the Hodge star operator is conformally invariant;  moreover,  it is an open condition \cite{lcp2} with respect to the $C^2$ topology.  
Since  any self-dual $2$-form $\omega$ satisfies 
$$\omega \wedge \omega = \omega \wedge \star \omega = |\omega |_h^2 ~d\mu_h,$$
it follows that  an appropriate $\omega$ is actually a symplectic form on $M$ if $[h]$ is of symplectic type. Assuming this, the conformally related metric $g\in [h]$ 
given by $g={2}^{-1/2}|\omega|_h h$ is then an {\em almost-K\"ahler metric}, in the sense that $g$ is related to 
 the symplectic form $\omega$ via $g= \omega (\cdot , J \cdot )$ for a unique almost-complex structure $J$ on $M$.
For our purposes, the important  point is that, in dimension $4$,  the almost-K\"ahler condition is equivalent to saying that 
 $\omega$ is harmonic and self-dual with respect to $g$, and that $|\omega|^2_g\equiv 2$.

While our primary aim here is to learn something about Einstein metrics,  we will more generally focus on oriented Riemannian $4$-manifolds 
 $(M, h)$ with {\em harmonic self-dual Weyl curvature}, in the sense that $\delta W^+:= -\nabla \cdot W^+=0$.
 When $h$ is Einstein, this property holds, as a consequence of the second Bianchi identity. However,  we will see in due course that 
 $\delta W^+=0$  is in general much weaker than the Einstein condition. 
 
 When $[h]$ is of symplectic type, it will prove  profitable to  study this equation
from the point of view of the conformally related  almost-K\"ahler metric $g$. This is quite tractable, because the divergence-free condition on
a section of $\odot^2_0 \Lambda^+$ is conformally invariant, albeit  \cite{pr2} {\em with an unexpected conformal weight}. In practice, 
this means that if $h = f^2 g$ has the property that $\delta W^+=0$, then $g$ will instead have the property that $\delta (fW^+)=0$. 
For us, the important point is that this then implies  a Weitzenb\"ock formula
\begin{equation}
\label{initio}
0 = \nabla^*\nabla (fW^+)+ \frac{s}{2} fW^+ - 6 fW^+\circ W^+ + 2 f|W^+|^2 I 
\end{equation}
for $fW^+$, considered as a section of $\End (\Lambda^+)$; cf.  \cite{derd,G1,pr2}.

To exploit this effectively, we will  need the following identity:
\begin{lem}
Any $4$-dimensional almost-K\"ahler manifold satisfies 
$$\langle W^+ , \nabla^*\nabla (\omega\otimes \omega )\rangle= [W^+(\omega , \omega )]^2 + 4 |W^+(\omega )|^2 - s W^+ (\omega , \omega )$$
at every point. 
\end{lem}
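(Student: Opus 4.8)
The plan is to prove this pointwise identity by unpacking the operator $\nabla^*\nabla(\omega\otimes\omega)$ using the Leibniz rule and then pairing against $W^+$. I would first record the almost-K\"ahler structure equations: since $\omega$ is self-dual with $|\omega|^2_g\equiv 2$, the form $\omega$ may be viewed as a unit-length (up to the factor $\sqrt 2$) section of $\Lambda^+$, and the identity $|\omega|^2=2$ differentiates to give $\langle\nabla\omega,\omega\rangle=0$ and, upon a second differentiation, $\langle\nabla^*\nabla\omega,\omega\rangle=|\nabla\omega|^2$. These relations will be the workhorses that convert derivative terms into the algebraic expression on the right-hand side.

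Next I would expand $\nabla^*\nabla(\omega\otimes\omega)$ via the product rule for the connection Laplacian acting on a tensor product: schematically
\begin{equation*}
\nabla^*\nabla(\omega\otimes\omega)=(\nabla^*\nabla\omega)\otimes\omega+\omega\otimes(\nabla^*\nabla\omega)-2\,\nabla\omega\cdot\nabla\omega,
\end{equation*}
where the last term is the trace over the differentiated slot. Pairing with $W^+\in\End(\Lambda^+)$, which is symmetric and self-dual, the two outer terms each contribute $\langle W^+\omega,\nabla^*\nabla\omega\rangle$-type quantities, while the cross term contributes a sum over an orthonormal frame of $\langle W^+\nabla_e\omega,\nabla_e\omega\rangle$. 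My strategy is then to invoke the Weitzenb\"ock formula for the harmonic form $\omega$ itself --- namely $\nabla^*\nabla\omega=2W^+(\omega)-\tfrac{s}{3}\omega$ obtained from the displayed Weitzenb\"ock identity in the excerpt (after polarizing and using harmonicity) --- to eliminate $\nabla^*\nabla\omega$ entirely in favor of $W^+(\omega)$ and $s\omega$. Substituting this and using $\langle W^+\omega,\omega\rangle=W^+(\omega,\omega)$ together with $\langle W^+\omega,W^+\omega\rangle=|W^+(\omega)|^2$ should collapse the outer terms into $2|W^+(\omega)|^2-\tfrac{2s}{3}W^+(\omega,\omega)$ plus corrections.

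The delicate point, and what I expect to be the main obstacle, is the cross term $\sum_e\langle W^+\nabla_e\omega,\nabla_e\omega\rangle$: this is genuinely a feature of the almost-K\"ahler geometry rather than a formal consequence, and handling it requires the special structure of $\nabla\omega$ in dimension four. The key fact I would lean on is that for an almost-K\"ahler $4$-manifold, $\nabla\omega$ takes values in the orthogonal complement of $\omega$ inside $\Lambda^+$ (from $\langle\nabla\omega,\omega\rangle=0$) and moreover $\nabla_X\omega$ is anti-self-dual-valued in a precise sense coming from $d\omega=0$; this constrains how $W^+$ acts on these vectors. Carefully accounting for the action of $W^+$ on the two-dimensional complement $\omega^\perp\subset\Lambda^+$ --- where the relevant eigenvalue combination, together with the trace-freeness encoded by the $|W^+|^2 I$ terms elsewhere, reorganizes the cross term --- is what should produce the remaining pieces $[W^+(\omega,\omega)]^2$ and $2|W^+(\omega)|^2$ and fix all the numerical coefficients.

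Finally I would assemble the contributions, checking that the scalar-curvature terms combine to give exactly $-sW^+(\omega,\omega)$ and that the $|W^+(\omega)|^2$ coefficients total $4$, matching the claimed identity. The computation is purely pointwise and algebraic once the Weitzenb\"ock substitution and the almost-K\"ahler relations are in hand, so no global analysis is needed; the entire content lies in correctly tracking the frame-dependent cross term against the $W^+$-action on $\Lambda^+$.
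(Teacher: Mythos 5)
Your skeleton is the same as the paper's: expand $\nabla^*\nabla(\omega\otimes\omega)$ by the Leibniz rule, dispose of the two outer terms using the Weitzenb\"ock formula $\nabla^*\nabla\omega = 2W^+(\omega)-\tfrac{s}{3}\omega$, and reduce the problem to the cross term $\sum_e W^+(\nabla_e\omega,\nabla^e\omega)$. But you never actually compute the cross term, and the hints you give for how it should go are wrong in ways that matter. Knowing that $\nabla_e\omega$ lies in the plane $\omega^\perp\subset\Lambda^+$ is not enough: a priori the symmetric tensor $\sum_e (\nabla_e\omega)\circledast(\nabla^e\omega)$ could be any positive semi-definite form on that $2$-plane --- for instance concentrated on a single direction $\eta$ --- in which case the pairing with $W^+$ would involve $W^+(\eta,\eta)$, a quantity that cannot be rewritten in terms of $W^+(\omega,\omega)$ and $|W^+(\omega)|^2$. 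Moreover, your claim that ``$\nabla_X\omega$ is anti-self-dual-valued'' is false: $\nabla\omega$ has \emph{no} $\Lambda^-$-component at all, being valued in $\omega^\perp\cap\Lambda^+$. Finally, your bookkeeping is inconsistent: the outer terms contribute $4|W^+(\omega)|^2-\tfrac{2s}{3}W^+(\omega,\omega)$ (each gives $2|W^+(\omega)|^2$, not the total of $2$ you wrote), and the cross term contributes \emph{no} $|W^+(\omega)|^2$ whatsoever --- it only produces $[W^+(\omega,\omega)]^2-\tfrac{s}{3}W^+(\omega,\omega)$ --- so your expectation that it supplies a missing $2|W^+(\omega)|^2$ cannot be realized by any correct computation.

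The missing idea, which is the actual content of the lemma, is the complex type decomposition $\Lambda^+\otimes\CC = \CC\omega\oplus K\oplus\overline{K}$, $K=\Lambda^{2,0}_J$, together with the fact that closedness of $\omega$ forces $\nabla\omega = \alpha\otimes\varphi+\bar\alpha\otimes\bar\varphi$ for a local unit section $\varphi$ of $K$ and a $1$-form $\alpha$ of type $(1,0)$. Because $(1,0)$-forms are isotropic for the complex-bilinear extension of the metric ($\alpha_e\alpha^e=0$), the $\varphi\circledast\varphi$ and $\bar\varphi\circledast\bar\varphi$ pieces of $\sum_e(\nabla_e\omega)\circledast(\nabla^e\omega)$ vanish, leaving $2|\alpha|^2\,\varphi\circledast\bar\varphi$, which is algebraically a negative multiple of $\omega\circledast\omega$; this yields the identity $(\nabla_e\omega)\circledast(\nabla^e\omega) = -\tfrac{1}{4}|\nabla\omega|^2\,\omega\circledast\omega$, hence $W^+(\nabla_e\omega,\nabla^e\omega) = -\tfrac{1}{4}|\nabla\omega|^2\,W^+(\omega,\omega)$. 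Combining this with $\tfrac{1}{2}|\nabla\omega|^2 = W^+(\omega,\omega)-\tfrac{s}{3}$ (which follows from $|\omega|^2\equiv 2$ and the Weitzenb\"ock formula, as you correctly note) gives exactly the cross-term contribution $\bigl[W^+(\omega,\omega)-\tfrac{s}{3}\bigr]W^+(\omega,\omega)$, and the lemma then follows by adding the outer terms. Without exploiting $d\omega=0$ through the $(1,0)$-type of $\alpha$, the cross term cannot be collapsed in this way, so your proposal has a genuine gap precisely at the step you yourself identified as the crux.
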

\begin{proof}First notice that the oriented Riemannian $4$-manifold $(M,g)$ satisfies
$$\Lambda^+\otimes \CC = \CC\omega \oplus K \oplus \overline{K},$$
where 
$K= \Lambda^{2,0}_J$ is the canonical line bundle of the almost-complex manifold $(M,J)$.
Locally choosing a unit section $\varphi$ of $K$, we thus have 
$$\nabla \omega = \alpha \otimes \varphi + \bar{\alpha} \otimes \bar{\varphi}$$
for a unique $1$-form $\alpha \in \Lambda^{1,0}_J$, since  $\nabla_{[a}\omega_{bc]}=0$ and $\omega^{bc}\nabla_a \omega_{bc}=  0$. 
If 
$$\circledast: \Lambda^+\times \Lambda^+\to \odot^2_0\Lambda^+$$
  denotes the symmetric trace-free product, we therefore have 
$$(\nabla_e \omega ) \circledast  (\nabla^e\omega )= 2|\alpha |^2 \varphi \circledast\bar{\varphi}  = -\frac{1}{4} |\nabla \omega|^2\omega \circledast \omega$$
and we thus deduce that 
\begin{eqnarray*} 
\langle W^+ , \nabla^*\nabla (\omega\otimes \omega )\rangle
& = &2W^+(\omega , \nabla^*\nabla \omega  ) - 2W^+(\nabla_e \omega , \nabla^e \omega ) \\
& = & 2 W^+(\omega , \nabla^*\nabla \omega  )  + \frac{1}{2}|\nabla \omega |^2 W^+(\omega , \omega )\\
& = & 2 W^+(\omega , 2W^+ ( \omega ) - \frac{s}{3} \omega  ) + 
\Big[ W^+(\omega , \omega ) -\frac{s}{3}\Big] W^+(\omega , \omega )\\
& = & -\frac{2}{3}s W^+ (\omega , \omega ) + 4 |W^+(\omega )|^2 
+ \Big[ W^+(\omega , \omega ) -\frac{s}{3}\Big] W^+(\omega , \omega )\\
& = &  [W^+(\omega , \omega )]^2 + 4 |W^+(\omega )|^2 - s W^+ (\omega , \omega )
 \end{eqnarray*}
 where we have used the Weitzenb\"ock formula 
 $$0= \nabla^* \nabla \omega - 2 W^+(\omega  ) + \frac{s}{3}\omega$$
for the harmonic self-dual $2$-form $\omega$, as well as the associated key identity 
 $$\frac{1}{2} |\nabla \omega |^2 =  W^+(\omega , \omega ) - \frac{s}{3}$$
resulting from the fact that  $|\omega |^2\equiv 2$.
 \end{proof}

Plugging this into our Weitzenb\"ock formula \eqref{initio} and integrating by parts, we thus see that whenever  a compact almost-K\"ahler $4$-manifold $(M,g, \omega)$
satisfies  $\delta (fW^+)=0$, we  then  automatically have 
\begin{eqnarray*} 0&=& 
\int_M \Big\langle\Big( \nabla^*\nabla fW^+ + \frac{s}{2} fW^+ - 6 fW^+\circ W^+ + 2 f|W^+|^2 I \Big), \omega \otimes
\omega \Big\rangle d\mu \\
&=& \int_M \Big[ \langle W^+ , \nabla^*\nabla (\omega\otimes \omega )\rangle  + \frac{s}{2} W^+(\omega , \omega ) 
 - 6 |W^+(\omega)|^2+ 2 |W^+|^2 |\omega |^2 \Big] f~ d\mu  \\
&=&  \int_M \Big[ \Big([W^+(\omega , \omega )]^2 + 4 |W^+(\omega )|^2 - s W^+ (\omega , \omega )\Big)
 \\&& \hphantom{\int_M \Big[ \Big([W^+(\omega , \omega )]^2 + 4 |W^+ } 
 + \frac{s}{2} W^+(\omega , \omega ) 
 - 6 |W^+(\omega)|^2+ 4 |W^+|^2 \Big]  f~d\mu  
 \\
&=&  \int_M \Big[ [W^+(\omega , \omega )]^2    - \frac{s}{2} W^+(\omega , \omega ) 
 - 2 |W^+(\omega)|^2+ 4 |W^+|^2 \Big]  f~d\mu ~. 
 \end{eqnarray*}
In other words, letting $W^+(\omega)^\perp$ denote the component of $W^+(\omega)$ perpendicular to $\omega$,  any compact  almost-K\"ahler manifold
$(M,g,\omega)$ with $\delta(fW^+)=0$ satisfies  the identity 
\begin{equation}
\label{gawa}
\int_M sW^+(\omega , \omega ) f~d\mu = 8\int_M \left(|W^+|^2 - \frac{1}{2} |W^+(\omega )^\perp|^2\right) f~d\mu~.
\end{equation}

To proceed further, we will now need another algebraic  observation: 
\begin{lem}
\label{dominion}
Any $4$-dimensional almost-K\"ahler manifold satisfies 
$$
|W^+|^2 - \frac{1}{2} |W^+(\omega )^\perp|^2\geq \frac{3}{8} \left[ W^+ (\omega , \omega ) \right]^2
$$
at every point, and equality can only hold at points where $W^+(\omega )^\perp=0$.
\end{lem}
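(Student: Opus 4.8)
The plan is to recognize that this is a pointwise, purely algebraic statement about the self-adjoint trace-free operator $W^+ \in \End(\Lambda^+)$ relative to the distinguished unit direction picked out by $\omega$, and to prove it by passing to an adapted orthonormal frame of $\Lambda^+$ and reducing everything to an elementary inequality constrained by the trace-free condition.

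First I would fix a point $p \in M$ and choose an orthonormal basis $\{e_1, e_2, e_3\}$ of the rank-$3$ fiber $\Lambda^+_p$ with $e_1 = \omega/\sqrt{2}$; this is legitimate precisely because $|\omega|^2 \equiv 2$ makes $\omega/\sqrt{2}$ a unit vector. Writing $W^+$ in this frame as the symmetric trace-free matrix $(W_{ij})$ with $W_{ij} = \langle W^+(e_i), e_j\rangle$, I would translate each quantity appearing in the statement into frame components. A short computation gives $W^+(\omega,\omega) = 2W_{11}$, while $W^+(\omega) = \sqrt{2}\,(W_{11}e_1 + W_{12}e_2 + W_{13}e_3)$ so that its component orthogonal to $\omega$ is $W^+(\omega)^\perp = \sqrt{2}\,(W_{12}e_2 + W_{13}e_3)$, whence $|W^+(\omega)^\perp|^2 = 2(W_{12}^2 + W_{13}^2)$; finally $|W^+|^2 = \sum_{i,j}W_{ij}^2$ is just the Hilbert--Schmidt norm, which is the normalization consistent with the Weitzenb\"ock formula \eqref{initio}.

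Substituting these into the asserted inequality and cancelling the off-diagonal $e_1$-terms, the claim reduces to
\[
W_{22}^2 + W_{33}^2 + W_{12}^2 + W_{13}^2 + 2W_{23}^2 \;\geq\; \tfrac{1}{2} W_{11}^2 .
\]
The crux is then the trace-free constraint $W_{22} + W_{33} = -W_{11}$, which, combined with the elementary estimate $W_{22}^2 + W_{33}^2 \geq \tfrac{1}{2}(W_{22}+W_{33})^2$ (equivalent to $(W_{22}-W_{33})^2 \geq 0$), immediately yields $W_{22}^2 + W_{33}^2 \geq \tfrac{1}{2}W_{11}^2$; discarding the manifestly nonnegative terms $W_{12}^2 + W_{13}^2 + 2W_{23}^2$ completes the inequality.

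For the equality clause I would trace back through this chain: equality forces each discarded term to vanish, and in particular $W_{12} = W_{13} = 0$. Since $W^+(\omega)^\perp = \sqrt{2}\,(W_{12}e_2 + W_{13}e_3)$, this is exactly the assertion that $W^+(\omega)^\perp = 0$, which is all that is claimed. (In fact equality demands the further conditions $W_{22} = W_{33}$ and $W_{23} = 0$, but these are not needed for the stated necessary condition.) I do not anticipate a serious obstacle here: once the adapted frame is chosen and the Hilbert--Schmidt normalization of $|W^+|^2$ is fixed, everything is elementary linear algebra, and the only point requiring genuine care is the bookkeeping of the factors of $\sqrt{2}$ arising from $|\omega|^2 = 2$.
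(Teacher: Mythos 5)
Your proof is correct and follows essentially the same route as the paper: fix an orthonormal frame of $\Lambda^+$ with first vector $\omega/\sqrt{2}$, express everything in terms of the symmetric trace-free $3\times 3$ matrix of $W^+$, and use the trace constraint to get the elementary diagonal estimate (your $W_{22}^2+W_{33}^2\geq \tfrac12 W_{11}^2$ is exactly the paper's completion of squares). The only cosmetic difference is that the paper additionally rotates the frame so that $W^+(\omega)^\perp \propto \varepsilon_2$, which lets it state the slightly sharper intermediate inequality $|W^+|^2 \geq |W^+(\omega)^\perp|^2 + \tfrac38\left[W^+(\omega,\omega)\right]^2$, but your version suffices for both the inequality and the equality clause.
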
 
\begin{proof}
If $A=[A_{jk}]$ is any symmetric  trace-free $3\times 3$ matrix, the fact that $A_{33}= -(A_{11}+A_{22})$ implies that  
$$\sum_{jk} A_{jk}^2 \geq 2A_{21}^2 +  A_{11}^2+A_{22}^2+A_{33}^2 = 2A_{21}^2 + \frac{3}{2}A_{11}^2 + 2 (\frac{A_{11}}{2}+A_{22})^2$$
and we therefore conclude that 
$$|A|^2 \geq  2 A_{21}^2 + \frac{3}{2} A_{11}^2 .$$

If we now let $A$ represent $W^+:\Lambda^+\to \Lambda^+$ with respect to an 
orthogonal basis $\varepsilon_1, \varepsilon_2, \varepsilon_3$ for $\Lambda^+$ such that  $\omega = \sqrt{2}\varepsilon_1$ and 
$W^+(\omega )^\perp \propto \varepsilon_2$, this inequality becomes
$$
|W^+|^2 \geq |W^+(\omega)^\perp|^2 + \frac{3}{8} \left[ W^+ (\omega , \omega )\right]^2
$$
which not only proves the desired inequality, but shows that it is actually strict whenever $\omega$ is  not an eigenvector of $W^+$. 
 \end{proof}

\noindent 
Combining \eqref{gawa}  with Lemma \ref{dominion}  now yields the 
  global inequality 
\begin{equation}
\label{seki}
\int_M sW^+(\omega , \omega ) f~d\mu \geq 3 \int_M \left[ W^+ (\omega , \omega ) \right]^2 f~d\mu ,
\end{equation}
with equality only if $W^+(\omega )^\perp\equiv 0$. 
It thus  follows that 
$$
0\geq \int_M W^+(\omega , \omega ) \left( W^+ (\omega , \omega ) -\frac{s}{3}\right) ~ f~d\mu .
$$
However, since 
$\frac{1}{2} |\nabla \omega |^2= W^+(\omega , \omega ) - \frac{s}{3} $ for any almost-K\"ahler $4$-manifold, 
this proves the following: 

\begin{prop}
Let  $(M^4,g,\omega)$ be a compact almost-K\"ahler manifold, and suppose that, for some positive function $f$,  the conformally related metric $h=f^2g$
 has harmonic self-dual 
Weyl curvature. Then $(M,g,\omega)$ satisfies the inequality 
\begin{equation}
0\geq \int_M    W^+ (\omega , \omega ) |\nabla \omega |^2 f~d\mu . 
\label{punch}
\end{equation}
\end{prop}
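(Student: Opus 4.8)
The plan is to read off \eqref{punch} as the endpoint of the chain of identities already assembled above: the integral identity \eqref{gawa}, the pointwise estimate of Lemma \ref{dominion}, and the standard almost-K\"ahler normalization $\tfrac{1}{2}|\nabla\omega|^2 = W^+(\omega,\omega)-\tfrac{s}{3}$. All of the genuinely analytic content has been expended upstream: the hypothesis that $h=f^2g$ has harmonic self-dual Weyl curvature translates, with the unexpected conformal weight, into $\delta(fW^+)=0$ for $g$, which when fed into the Weitzenb\"ock formula \eqref{initio}, paired against $\omega\otimes\omega$, and integrated by parts using the first Lemma produces exactly \eqref{gawa}. What remains for the proposition is therefore bookkeeping.

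First I would invoke Lemma \ref{dominion} pointwise. Because $f>0$, multiplying the inequality
$$|W^+|^2 - \tfrac{1}{2}|W^+(\omega)^\perp|^2 \;\geq\; \tfrac{3}{8}\left[W^+(\omega,\omega)\right]^2$$
by $f$ and integrating preserves its direction, so the right-hand side of \eqref{gawa} is bounded below by $3\int_M [W^+(\omega,\omega)]^2\, f\,d\mu$. This is precisely \eqref{seki}. I would also record, from the equality clause of Lemma \ref{dominion}, that equality in \eqref{seki} can occur only where $W^+(\omega)^\perp$ vanishes identically; this refinement is not needed for the inequality \eqref{punch} itself, but is worth noting for later use.

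Next I would transpose the scalar-curvature term in \eqref{seki} and divide by the positive constant $3$, obtaining $\int_M W^+(\omega,\omega)\big(W^+(\omega,\omega)-\tfrac{s}{3}\big)\, f\,d\mu \leq 0$. Substituting the almost-K\"ahler identity $W^+(\omega,\omega)-\tfrac{s}{3}=\tfrac{1}{2}|\nabla\omega|^2$ into the second factor, and discarding the harmless positive constant $\tfrac{1}{2}$, then gives \eqref{punch}.

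I do not expect any real obstacle in this final assembly; the only point demanding care is sign bookkeeping---confirming that each transposition and division by a positive constant leaves the direction of the inequality intact, and that after substitution the sign of the integrand is governed solely by the factor $W^+(\omega,\omega)$. The substantive difficulties, namely the conformal-weight subtlety behind $\delta(fW^+)=0$, the two Weitzenb\"ock computations, and the $3\times 3$ trace-free matrix estimate underlying Lemma \ref{dominion}, all belong to the material preceding the statement.
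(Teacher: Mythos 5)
Your proposal is correct and follows the paper's own argument exactly: the hypothesis gives $\delta(fW^+)=0$ via the conformal-weight observation, which through \eqref{initio}, the first Lemma, and integration by parts yields \eqref{gawa}; then Lemma \ref{dominion} gives \eqref{seki}, and rearranging together with the identity $\tfrac{1}{2}|\nabla\omega|^2 = W^+(\omega,\omega)-\tfrac{s}{3}$ produces \eqref{punch}. The sign bookkeeping you describe (multiplying by $f>0$, dividing by $3$, dropping the factor $\tfrac12$) is precisely what the paper does.
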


This has  an interesting immediate consequence:

\begin{prop} 
\label{clarion}
Let  $(M^4,g,\omega)$ be a compact connected  almost-K\"ahler manifold with  
$W^+(\omega , \omega ) \geq 0$, and suppose that the conformally related metric $h=f^2 g$
satisfies $\delta W^+ =0$. Then either $g$ is a K\"ahler metric with scalar curvature $s=c/f$ for some  constant $c >  0$, or else $g$ satisfies $W^+\equiv 0$, and so is an anti-self-dual metric. \end{prop}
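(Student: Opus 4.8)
The plan is to extract the maximal pointwise information from \eqref{punch} and then globalise it by a connectedness argument. Since $f>0$ and, by hypothesis, $W^+(\omega,\omega)\geq 0$, the integrand in \eqref{punch} is pointwise non-negative while the integral is non-positive; hence $W^+(\omega,\omega)\,|\nabla\omega|^2\equiv 0$ and the integral in \eqref{punch} in fact vanishes. Because \eqref{punch} was obtained from \eqref{seki} with no further inequality, the vanishing of this integral forces \eqref{seki} to be an equality; in view of \eqref{gawa} this means the algebraic inequality of Lemma \ref{dominion}, integrated against $f\,d\mu>0$, is saturated, so it holds with equality at every point. Re-examining the matrix computation in the proof of Lemma \ref{dominion}, saturation forces not merely $W^+(\omega)^\perp=0$ but the vanishing of the entire gap $A_{21}^2+2A_{23}^2+2(\tfrac12 A_{11}+A_{22})^2$, i.e. the K\"ahler eigenvalue pattern $A_{21}=A_{23}=0$, $A_{22}=A_{33}=-\tfrac12 A_{11}$. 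Thus at every point of $M$
\[
W^+=\tfrac{\mu}{2}\,(3\pi_\omega-I),\qquad \mu:=\tfrac12 W^+(\omega,\omega)\geq 0,
\]
where $\pi_\omega$ is orthogonal projection of $\Lambda^+$ onto the line spanned by $\omega$. In particular $|W^+|^2=\tfrac32\mu^2$, so $W^+$ vanishes at a point exactly when $\mu$ does.

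With this structural identity the problem reduces to analysing the sign of $\mu$. Put $G=\{\mu>0\}=\{W^+\neq 0\}$, an open subset of $M$. Since $W^+(\omega,\omega)\,|\nabla\omega|^2=2\mu\,|\nabla\omega|^2\equiv 0$, we have $\nabla\omega=0$ on $G$, so $g$ is K\"ahler there, with $s=6\mu>0$. On this region $3\pi_\omega-I$ is parallel and invertible (its eigenvalues are $2,-1,-1$), so writing $fW^+=\tfrac{fs}{12}(3\pi_\omega-I)$ and taking the divergence shows that $\delta(fW^+)=0$ is equivalent to $d(fs)=0$. Hence $fs$ is a positive constant $c_i$ on each connected component $G_i$ of $G$.

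It remains to globalise, and this passage from a pointwise dichotomy to the clean global alternative is the main obstacle. If $G=\emptyset$ then $\mu\equiv 0$, so $W^+\equiv 0$ and $g$ is anti-self-dual, the second alternative. Otherwise $G\neq\emptyset$, and I claim $G=M$. Each $G_i$ is already open; it is also closed, since on $G_i$ one has $s=c_i/f$ and therefore $\mu=s/6\geq c_i/(6\max_M f)>0$: were $p$ a boundary point of $G_i$, then $\mu(p)=0$ because $p\notin G$, yet $\mu$ is bounded below by the positive constant $c_i/(6\max_M f)$ at all nearby points of $G_i$, contradicting continuity of $\mu$. Connectedness of $M$ then gives $G=M$, so $\nabla\omega\equiv 0$, $g$ is globally K\"ahler, and $fs\equiv c>0$, i.e. $s=c/f$, the first alternative. (Equivalently, one could note that $fW^+$ solves the elliptic system \eqref{initio}, which is linear with bounded coefficients once $|W^+|$ is controlled, so Aronszajn's unique continuation theorem forbids $W^+$ from vanishing on a proper non-empty open subset of the connected manifold $M$.) The two delicate points are the extraction of the full equality case of Lemma \ref{dominion} and this final globalisation; by contrast the K\"ahler divergence identity $\delta(fW^+)=0\Leftrightarrow d(fs)=0$ is routine once $3\pi_\omega-I$ is recognised as parallel and invertible.
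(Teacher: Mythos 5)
Your proof is correct, and it shares the paper's broad strategy---force the integral in \eqref{punch} to vanish, deduce that $g$ is K\"ahler on the region where $W^+(\omega,\omega)\neq 0$, use $\delta(fW^+)=0$ there to make $fs$ locally constant, and globalize by connectedness---but your endgame genuinely differs from the paper's. You extract the \emph{full} equality case of Lemma \ref{dominion} (not merely $W^+(\omega)^\perp\equiv 0$), obtaining the pointwise identity $W^+=\frac{\mu}{2}(3\pi_\omega - I)$ on all of $M$; this identifies $\{W^+=0\}$ with $\{W^+(\omega,\omega)=0\}$ and lets you run an open-and-closed argument on the components $G_i$ of $G=\{\mu>0\}$, using the uniform lower bound $\mu\geq c_i/(6\max_M f)>0$ to show each $G_i$ is closed. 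The paper never needs this pointwise structure: it instead observes that the smooth $1$-form $d[fW^+(\omega,\omega)]$ vanishes on $U$, hence on $\overline{U}$ by continuity, and vanishes trivially on the open set $M-\overline{U}$ where $fW^+(\omega,\omega)\equiv 0$; so $fW^+(\omega,\omega)$ is a global constant $c/3\geq 0$, and the dichotomy is just the sign of $c$, with the degenerate case $c=0$ handled by feeding equality in \eqref{seki} back into \eqref{gawa} to force $\int_M |W^+|^2 f\,d\mu=0$. Your route buys a stronger pointwise conclusion (the K\"ahler eigenvalue pattern of $W^+$ everywhere, before knowing $g$ is K\"ahler) at the cost of a more delicate saturation analysis; the paper's continuity trick is shorter and bypasses the equality case of the matrix lemma entirely. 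One caveat: your parenthetical alternative via Aronszajn unique continuation is not, by itself, a substitute for your clopen argument---unique continuation rules out $W^+$ vanishing on a nonempty open set without vanishing identically, but it cannot exclude the a priori possibility that the zero set of $\mu$ is nonempty with empty interior (e.g.\ a single point), which is precisely what your main argument eliminates. Since that remark is only an aside, the proof stands as written.
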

\begin{proof} 
Recall that  $f > 0$ by convention, and that $W^+(\omega , \omega ) \geq 0$ by assumption. Thus \eqref{punch} implies that 
$$\int_M  W^+(\omega , \omega ) |\nabla \omega |^2 f~d\mu =0,$$
so  that 
$\nabla \omega=0$ wherever $W^+(\omega , \omega )\neq 0$.
If $U\subset M$ is the open subset where $W^+(\omega , \omega )\neq 0$, the restriction of $g$ to $U$  is therefore   K\"ahler.
On the other hand,   by hypothesis, $g$ satisfies $\delta (fW^+)=0$. However, for any K\"ahler manifold of real dimension $4$, 
$W^+$ is the trace-free part of $(s/4) \omega \otimes \omega$, where the scalar curvature $s$
satisfies $s=3W^+(\omega, \omega )$. It follows that  $d[fW^+(\omega, \omega )]=0$ on $U$. By continuity, we therefore
have $d[fW^+(\omega, \omega )]=0$ on the closure $\overline{U}$ of $U$, too. On the other hand, $fW^+(\omega, \omega ) \equiv 0$ on $M-\overline{U}$, so we also have 
$d[fW^+(\omega, \omega )]=0$ on the  open set $M-\overline{U}$. Hence $d[fW^+(\omega, \omega )]=0$ on all of $M$. Since $M$ is connected, it follows that $fW^+(\omega, \omega )=c/3$ for
some non-negative constant $c\geq 0$. If $c>0$, $M=U$, and $(M,g)$ is a K\"ahler manifold, with $s=3 W^+(\omega, \omega ) = c/f$. 
On the other hand, if $c=0$, we have $W^+(\omega, \omega ) \equiv 0$, and therefore have equality in
\eqref{seki}. However, this implies that $W^+(\omega )^\perp\equiv 0$, and \eqref{gawa} therefore 
implies that $W^+\equiv 0$, as claimed. 
\end{proof}

As a special case, we therefore obtain the following key result: 

\begin{thm} \label{clarity}
Let $(M,h)$ be a compact oriented Riemannian $4$-manifold with $\delta W^+=0$. If the conformal class $[h]$ is of positive symplectic type, then 
$h=s^{-2}g$ for a unique K\"ahler metric $g$ of scalar curvature $s > 0$. Conversely, if $g$ is any K\"ahler metric 
of positive scalar curvature, the conformally related metric $h=s^{-2}g$ satisfies  $\delta W^+=0$.
\end{thm}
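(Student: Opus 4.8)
The plan is to deduce Theorem~\ref{clarity} from Proposition~\ref{clarion} by carefully handling the conformal bookkeeping, and then to verify the converse by a direct computation on K\"ahler metrics. The key point is that ``positive symplectic type'' gives me everything I need to invoke the preceding machinery: by hypothesis there is a self-dual harmonic $2$-form $\omega_0$ on $(M,h)$ with $W^+(\omega_0,\omega_0)>0$ everywhere, hence in particular $\omega_0\neq 0$ everywhere, so $[h]$ is of symplectic type and $\omega_0$ is (up to scale) a symplectic form. First I would pass to the canonically associated almost-K\"ahler representative: normalizing so that $|\omega|^2_g\equiv 2$, there is a unique $g\in[h]$ and a unique compatible almost-complex structure $J$ with $g=\omega(\cdot,J\cdot)$, as recalled in the opening of \S\ref{torso}. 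Writing $h=f^2g$ with $f>0$, the hypothesis $\delta W^+=0$ for $h$ translates, via the conformally weighted divergence identity, into $\delta(fW^+)=0$ for $g$, which is exactly the hypothesis of Proposition~\ref{clarion}.

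Next I would apply Proposition~\ref{clarion} to $(M,g,\omega)$. Since $W^+(\omega,\omega)>0$ everywhere, the anti-self-dual alternative $W^+\equiv 0$ is excluded, so the proposition forces $g$ to be K\"ahler with scalar curvature $s=c/f$ for a positive constant $c$. For a K\"ahler surface one has the pointwise relation $s=3W^+(\omega,\omega)$, so $s>0$ everywhere, and $f=c/s$. Then $h=f^2g=(c/s)^2 g$, which up to the harmless constant rescaling $c$ (absorbed into the $\RR^\times$-action, or simply into the choice of representative) is precisely $s^{-2}g$. To establish uniqueness of $g$, I would argue that the almost-K\"ahler representative of $[h]$ is pinned down by the normalization $|\omega|_g^2\equiv 2$ together with the choice of $\omega$; since the self-dual harmonic form is unique up to scale in the relevant setting, the K\"ahler metric $g$ it produces is determined by $[h]$.

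For the converse, I would start from an arbitrary K\"ahler metric $g$ with $s>0$ and set $h=s^{-2}g$. The cleanest route is to recall again that on a K\"ahler surface $W^+$ is the trace-free part of $(s/4)\,\omega\otimes\omega$, so $fW^+$ with $f=1/s$ is the trace-free part of $\tfrac14\,\omega\otimes\omega$, a quantity built from the parallel form $\omega$. Since $\nabla\omega=0$ in the K\"ahler case, $fW^+$ is itself parallel up to the constant factor, and in particular $\delta(fW^+)=0$; reversing the conformal weight identity then gives $\delta W^+=0$ for $h=f^2 g=s^{-2}g$. I expect the main obstacle to be the conformal-weight bookkeeping: keeping straight that the divergence-free condition on sections of $\odot^2_0\Lambda^+$ transforms with the anomalous weight recorded earlier, so that $\delta W^+=0$ for $h$ is genuinely equivalent to $\delta(fW^+)=0$ for $g$ with the correct power $f=1/s$, rather than some other power. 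Getting this weight exactly right in both directions is what makes the statement $h=s^{-2}g$ (as opposed to a different conformal factor) precise, and it is the one place where a sign or exponent error would be easy to commit.
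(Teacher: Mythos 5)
Your overall strategy is exactly the paper's: pass to the normalized almost-K\"ahler representative, use the weighted divergence identity to convert $\delta W^+=0$ for $h$ into $\delta(fW^+)=0$ for the almost-K\"ahler metric, invoke Proposition~\ref{clarion} (with the anti-self-dual alternative excluded by $W^+(\omega,\omega)>0$), and prove the converse from the fact that $s^{-1}W^+$ is the trace-free part of $\frac14\,\omega\otimes\omega$, hence parallel. That part is sound. However, your handling of the exact normalization and of uniqueness has two genuine gaps. First, writing $\hat g$ for your normalized almost-K\"ahler metric, Proposition~\ref{clarion} gives you $h=c^2\hat s^{-2}\hat g$, and you dismiss the constant $c$ as ``harmless,'' to be ``absorbed into the $\RR^\times$-action.'' But the theorem asserts an exact equality $h=s^{-2}g$ for the given metric $h$, not for $h$ up to rescaling, so the constant cannot be absorbed that way. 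What does work---and what the paper does---is to rescale the \emph{K\"ahler} metric: replacing $\hat g$ by $g=a^2\hat g$ replaces the scalar curvature $\hat s$ by $s=a^{-2}\hat s$, hence replaces $s^{-2}g$ by $a^{6}\hat s^{-2}\hat g$, so that $h=s^{-2}g$ holds for exactly one choice, namely $a=\sqrt[3]{c}$. This little computation is also precisely what yields uniqueness along the ray of rescalings, since $a\mapsto a^6$ is injective on positive reals; your phrase ``the K\"ahler metric $g$ it produces is determined by $[h]$'' only gives determination \emph{up to scale}, which is not the uniqueness claimed.

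Second, your uniqueness argument appeals to ``the self-dual harmonic form is unique up to scale in the relevant setting,'' i.e.\ to $b_+(M)=1$. But $b_+=1$ is \emph{not} a hypothesis of Theorem~\ref{clarity} (unlike Theorem~\ref{gemstone}); it must be proved. The paper closes this by observing that, once Proposition~\ref{clarion} has produced a K\"ahler metric of positive scalar curvature on $M$, Yau's vanishing theorem for the geometric genus forces $b_+(M)=1$, and only then is the harmonic self-dual form---and hence the ray of candidate K\"ahler metrics---unique. Without this step, a priori $[h]$ could contain several everywhere-nonzero self-dual harmonic $2$-forms, each generating its own almost-K\"ahler representative and its own candidate K\"ahler metric, and your uniqueness claim is unsupported. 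Both gaps are fixable in a few lines, but as written your proposal establishes only that $h$ is conformally K\"ahler with positive scalar curvature, not the precise normalized form $h=s^{-2}g$ nor the uniqueness of $g$.
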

\begin{proof}
To say that $[h]$ is of positive symplectic type means that there is a self-dual harmonic 
$2$-form $\omega$ on $(M,h)$ such that $W^+(\omega , \omega ) > 0$ at every point of $M$.
Rescaling $h$ to make $\omega$ have constant norm $\sqrt{2}$ results in 
an almost-K\"ahler metric $\hat{g}$    such that 
$h=\hat{f}^2\hat{g}$ for some positive function $\hat{f}$.
If $h$ satisfies $\delta W^+=0$, the almost-K\"ahler metric $\hat{g}$  then satisfies  $W^+(\omega , \omega )> 0$ and 
 $\delta (\hat{f}W^+)=0$, so Proposition \ref{clarion}
then tells us that $\hat{g}$ is actually K\"ahler,  with scalar curvature $\hat{s}=c/\hat{f}$ for some positive constant $c$.  In particular,  $M$ admits a K\"ahler metric with positive scalar curvature, and Yau's vanishing 
theorem \cite{yauruled}  for the geometric genus  therefore implies that 
$b_+(M) = 1$. Thus the choice of $\omega$ is in fact unique up to an overall multiplicative constant,
and the choice 
 of $\hat{g}$ is therefore  determined up to constant rescalings. 
But if, for  a  positive constant $a$, we
 replace $\hat{g}$ with  $g=a^2\hat{g}$, we must also replace  $\hat{f}$ with $f=a^{-1}\hat{f}$; and
  note  that the scalar curvature of  $g$ is then  $s=a^{-2}\hat{s}$.
Since $\hat{f}=c\hat{s}^{-1}$, we then have $h=\hat{f}^2 \hat{g}= c^2 \hat{s}^{-2} \hat{g}= c^2 (a^{2}s)^{-2}(a^{-2}g)= (ca^{-3})^2 s^{-2}g$. This shows that 
setting $a=\sqrt[3]{c}$ results in 
a K\"ahler metric $g$ such that $h=s^{-2} g$, and moreover shows that this choice  yields the only 
K\"ahler metric with this property. 

Conversely \cite{derd}, if $g$ is a K\"ahler metric with $s> 0$, $s^{-1}W^+$ is parallel, so that, in particular,  we have $\delta (s^{-1}W^+)=0$. Thus 
$h=s^{-2}g$ satisfies $\delta W^+=0$, as promised. 
\end{proof}

Theorem \ref{gemstone} is now a straightforward consequence. Indeed, since the second Bianchi identity implies that 
any Einstein metric on an oriented $4$-manifold satisfies $\delta W^+=0$, Theorem \ref{clarify} tells us that  every 
Einstein metric $h$  of positive symplectic type must be conformally K\"ahler. Moreover, since the 
conformal class $[h]$ contains a representative $g$ with $s > 0$, the constant scalar curvature $4\lambda$  of $h$ must \cite{trud} be positive, 
too. Theorem \ref{gemstone}  therefore follows from the known classification \cite{lebuniq} of  conformally K\"ahler, Einstein   
metrics on compact $4$-manifolds. 

\section{Almost-K\"ahler Manifolds Revisited}

As an added bonus, the results of \S \ref{torso} also have interesting consequences  in 
 the narrower context of  almost-K\"ahler geometry; for related work, see \cite{bladra}. Our main such application is the following:
\begin{thm} \label{clarify}
Let $(M,g,\omega)$ be a compact almost-K\"ahler $4$-manifold with non-negative scalar curvature and harmonic self-dual Weyl tensor:
$$s\geq 0, \qquad \delta W^+=0.$$
 Then
$(M,g,\omega)$ is a constant-scalar-curvature K\"ahler manifold. \end{thm}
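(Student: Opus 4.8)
The plan is to deduce Theorem \ref{clarify} from the machinery already built up in \S\ref{torso}, specializing to the case $f\equiv 1$ and exploiting the sign hypothesis $s\geq 0$. First I would observe that when $f\equiv 1$, the hypothesis $\delta W^+=0$ together with the almost-K\"ahler structure places us exactly in the setting of the Proposition preceding Proposition \ref{clarion}, so inequality \eqref{punch} reads
\begin{equation*}
0\geq \int_M W^+(\omega,\omega)\,|\nabla\omega|^2\,d\mu.
\end{equation*}
The difficulty is that, unlike Proposition \ref{clarion}, we are \emph{not} assuming $W^+(\omega,\omega)\geq 0$ pointwise, so this integral inequality does not immediately force the integrand to vanish.

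The key is therefore to extract sign information about $W^+(\omega,\omega)$ from the scalar curvature hypothesis. Recall the almost-K\"ahler identity $\tfrac12|\nabla\omega|^2=W^+(\omega,\omega)-\tfrac{s}{3}$, which rearranges to $W^+(\omega,\omega)=\tfrac{s}{3}+\tfrac12|\nabla\omega|^2$. Since $s\geq 0$ by hypothesis, this shows $W^+(\omega,\omega)\geq 0$ at every point, with strict positivity wherever $s>0$ or $\nabla\omega\neq 0$. Now the hypotheses of Proposition \ref{clarion} are met with $f\equiv 1$ (so $h=g$ and $\delta W^+=0$ directly). Applying that proposition, I conclude that either $g$ is K\"ahler with scalar curvature $s=c/f=c$ constant and positive, or else $g$ is anti-self-dual with $W^+\equiv 0$. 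The first alternative already yields a constant-scalar-curvature K\"ahler manifold, as desired.

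It remains to dispose of the anti-self-dual alternative. If $W^+\equiv 0$, then in particular $W^+(\omega,\omega)\equiv 0$, so the identity above forces $\tfrac{s}{3}+\tfrac12|\nabla\omega|^2=0$; since both summands are non-negative, we get $s\equiv 0$ and $\nabla\omega\equiv 0$ simultaneously. But $\nabla\omega=0$ says precisely that the almost-complex structure is parallel, so $\omega$ is in fact K\"ahler, and $s\equiv 0$ is constant. Thus this case also produces a constant-scalar-curvature (indeed scalar-flat) K\"ahler manifold, completing the proof.

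The main obstacle, as noted, is that Proposition \ref{clarion} is stated under the pointwise hypothesis $W^+(\omega,\omega)\geq 0$, whereas Theorem \ref{clarify} hypothesizes $s\geq 0$ instead; the entire content of the argument is the one-line observation that the almost-K\"ahler Weitzenb\"ock identity $\tfrac12|\nabla\omega|^2=W^+(\omega,\omega)-\tfrac{s}{3}$ converts the scalar-curvature sign condition into the required curvature sign condition. Once that bridge is in place, everything else is bookkeeping within the two alternatives of Proposition \ref{clarion}, and the scalar-flat anti-self-dual case is seen to be K\"ahler rather than an exception.
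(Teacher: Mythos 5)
Your proposal is correct and follows essentially the same route as the paper: both convert the hypothesis $s\geq 0$ into the pointwise condition $W^+(\omega,\omega)\geq 0$ via the almost-K\"ahler identity $W^+(\omega,\omega)=\tfrac{s}{3}+\tfrac12|\nabla\omega|^2$, then apply Proposition \ref{clarion} with $f=1$, and dispose of the anti-self-dual alternative by noting that $W^+(\omega,\omega)\equiv 0$ forces $s\equiv 0$ and $\nabla\omega\equiv 0$, so the metric is scalar-flat K\"ahler. The only cosmetic difference is your initial detour through inequality \eqref{punch}, which the paper bypasses by invoking Proposition \ref{clarion} directly.
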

\begin{proof}
For any almost-K\"ahler manifold, 
$$W^+(\omega , \omega ) = \frac{s}{3}+\frac{1}{2}|\nabla \omega |^2$$
so that the hypothesis $s \geq 0$ implies $W^+(\omega, \omega ) \geq  0$.
Proposition  \ref{clarion}, with $f=1$, therefore  tells us that $(M,g)$ is K\"ahler, with scalar curvature $s=c/f=c$  for some positive constant $c$, or else that $W^+\equiv 0$. In the latter case, 
we then have $0 = 3 W^+(\omega , \omega ) \geq s \geq 0$, so $s\equiv 0$, and hence
$|\nabla \omega |^2 = 2 W^+(\omega , \omega ) - 2s/3 =0$. Thus 
$(M,g)$ is constant-scalar-curvature K\"ahler, even  in the exceptional  case. 
\end{proof}

 Conversely,  any constant-scalar-curvature K\"ahler manifold of real dimension $4$  
satisfies $\delta W^+=0$, independent of the sign of $s$. While 
 the study of  ``cscK'' (constant-scalar-curvature K\"ahler) metrics on compact complex surfaces is an  active area of ongoing research, 
many existence results   are already available \cite{arpa1,dontor,klp,leblown,rolsing,yujen}.
However, we should emphasize that 
the  non-negativity of the scalar curvature  plays a crucial role in Theorem \ref{clarify}. For example,  there exist many compact almost-K\"ahler manifolds
with $W^+\equiv 0$ which are not K\"ahler.
Indeed, such examples
 can be obtained \cite{inyoungthesis} by deforming scalar-flat K\"ahler metrics through anti-self-dual conformal classes,  and then conformally rescaling to make $|\omega|\equiv \sqrt{2}$. Examples of this type 
automatically  have $s\leq 0$, with $s<0$ on an open dense subset. 

\bigskip

Since any Einstein $4$-manifold satisfies $\delta W^+=0$, Theorem \ref{clarify} provides
a new proof of Sekigawa's  breakthrough result  \cite{seki1} on the  Goldberg conjecture: 

\begin{cor}[Sekigawa] Every compact almost-K\"ahler Einstein $4$-manifold with non-negative Einstein constant  is K\"ahler-Einstein.
\end{cor}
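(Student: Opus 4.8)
The plan is to obtain this as an immediate specialization of Theorem \ref{clarify}, by verifying that a compact almost-K\"ahler Einstein $4$-manifold with non-negative Einstein constant satisfies the two hypotheses $s \geq 0$ and $\delta W^+ = 0$ required there. First I would trace the Einstein equation $r = \lambda g$: in real dimension four this gives scalar curvature $s = 4\lambda$, so the assumption $\lambda \geq 0$ yields $s \geq 0$ directly, supplying the first hypothesis.

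Next I would invoke the standard consequence of the second Bianchi identity, already recalled in \S\ref{torso}, that the self-dual Weyl curvature of any oriented Einstein $4$-manifold is harmonic: $\delta W^+ = 0$. This supplies the second hypothesis. With both conditions in hand, Theorem \ref{clarify} applies and shows that $(M,g,\omega)$ is a constant-scalar-curvature K\"ahler manifold. Since $g$ was assumed Einstein to begin with, and is now seen to be K\"ahler, it is K\"ahler-Einstein by definition, which is precisely the desired conclusion.

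The feature worth stressing is that all the genuine analytic labor has already been spent upstream, in the Weitzenb\"ock formula \eqref{initio}, the integral identity \eqref{gawa}, the pointwise estimate of Lemma \ref{dominion}, and the resulting inequality \eqref{seki}, culminating in Proposition \ref{clarion} and Theorem \ref{clarify}. Consequently this corollary demands no further hard work; it is just the case $f \equiv 1$ together with the two elementary observations above. The only delicate point lives inside Theorem \ref{clarify} rather than in this deduction, namely disposing of the alternative $W^+ \equiv 0$ permitted by Proposition \ref{clarion}; there the sign chain $0 = 3W^+(\omega,\omega) \geq s \geq 0$ forces $s \equiv 0$ and hence $\nabla \omega = 0$, so even that exceptional branch returns a scalar-flat K\"ahler structure. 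In short, there is no separate obstacle to overcome here: the corollary simply records the fact that Einstein metrics furnish a rich supply of examples with $\delta W^+ = 0$, and the real theorem is Theorem \ref{clarify} itself.
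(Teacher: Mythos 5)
Your proposal is correct and matches the paper's own (implicit) argument exactly: the paper also deduces the corollary directly from Theorem \ref{clarify}, noting that the second Bianchi identity gives $\delta W^+=0$ for any Einstein $4$-manifold, while $s=4\lambda\geq 0$ supplies the sign hypothesis, so that the Einstein metric is Kähler and hence Kähler-Einstein. Nothing further is needed.
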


This fact  provided  a useful guidepost en route to the present results. 

\bigskip 

The proof of Theorem \ref{clarify} in fact still works if we merely impose the ostensibly weaker hypothesis that 
$s+ t W^+ (\omega, \omega ) \geq 0$ for some constant $t \geq 0$, since 
any such hypothesis will imply that $W^+ (\omega, \omega ) \geq 0$, with $s=0$ if equality holds. 
In particular, one reaches exactly the same conclusion if we merely assume that the so-called
star-scalar curvature
$$s^* = s +  |\nabla \omega|^2 = \frac{s}{3} + 2 W^+ (\omega , \omega )$$
is non-negative:
\begin{prop} \label{lemon}
Let $(M,g,\omega)$ be a compact almost-K\"ahler $4$-manifold with non-negative star-scalar curvature and harmonic self-dual Weyl tensor:
$$s^*\geq 0, \qquad \delta W^+=0.$$
 Then
$(M,g,\omega)$ is a constant-scalar-curvature K\"ahler manifold. \end{prop}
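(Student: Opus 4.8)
The plan is to follow the proof of Theorem \ref{clarify} almost verbatim, after first showing that the hypothesis $s^* \geq 0$ again forces $W^+(\omega,\omega) \geq 0$; indeed, in the language of the preceding remark, $s^* \geq 0$ is precisely the instance $t = 6$ of the condition $s + t\, W^+(\omega,\omega) \geq 0$, since $3s^* = s + 6\, W^+(\omega,\omega)$.

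The cleanest route to the key inequality is a direct algebraic reformulation. Eliminating $s$ between the almost-K\"ahler identity $W^+(\omega,\omega) = \frac{s}{3} + \frac{1}{2}|\nabla\omega|^2$ and the definition $s^* = s + |\nabla\omega|^2$ gives
$$W^+(\omega,\omega) = \frac{s^*}{3} + \frac{1}{6}|\nabla\omega|^2 .$$
Both terms on the right are non-negative whenever $s^* \geq 0$, so this immediately yields $W^+(\omega,\omega) \geq 0$ at every point; moreover it shows that $W^+(\omega,\omega)$ vanishes at a point precisely when $s^*$ and $|\nabla\omega|$ both do.

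With $W^+(\omega,\omega)\geq 0$ in hand, I would invoke Proposition \ref{clarion} with $f \equiv 1$, so that $h = g$ trivially satisfies $\delta W^+ = 0$. This produces the usual dichotomy: either $g$ is K\"ahler with positive constant scalar curvature, in which case the conclusion holds, or else $W^+ \equiv 0$. To settle the remaining anti-self-dual case, I would feed $W^+(\omega,\omega) \equiv 0$ back into the displayed identity; since $s^* \geq 0$ and $|\nabla\omega|^2 \geq 0$, this forces both $s^* \equiv 0$ and $\nabla\omega \equiv 0$. The vanishing of $\nabla\omega$ makes $J$ parallel, hence $g$ K\"ahler, while $s = s^* - |\nabla\omega|^2 \equiv 0$ makes its scalar curvature the constant zero. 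Thus $(M,g,\omega)$ is constant-scalar-curvature K\"ahler in either case. No genuinely new obstacle arises: all the analytic content is already packaged in Proposition \ref{clarion}, and the only point demanding care is the bookkeeping of the first-step identity, where one must verify that the coefficient of $|\nabla\omega|^2$ comes out strictly positive, so that $s^* \geq 0$ really does imply $W^+(\omega,\omega) \geq 0$.
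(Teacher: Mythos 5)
Your proof is correct and is essentially the paper's own argument: the paper proves this proposition via the remark that $s^* = \frac{s}{3} + 2W^+(\omega,\omega)$ is the $t=6$ case of the hypothesis $s + tW^+(\omega,\omega)\geq 0$, which forces $W^+(\omega,\omega)\geq 0$ (with $s=0$ and $\nabla\omega=0$ wherever equality holds), and then runs the proof of Theorem \ref{clarify} unchanged. Your identity $W^+(\omega,\omega)=\frac{s^*}{3}+\frac{1}{6}|\nabla\omega|^2$ is just an explicit repackaging of that same observation, and your handling of the $W^+\equiv 0$ case matches the paper's.
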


Kirchberg \cite{kirch} has elsewhere investigated almost-K\"ahler $4$-manifolds with harmonic Weyl tensor and positive star-scalar curvature. Since  the hypothesis $\delta W =0$
is equivalent to 
$\delta W^+= \delta W^- =0,$
and is therefore  stronger than the hypothesis $\delta W^+=0$ of Proposition \ref{lemon},  we can recover many of Kirchberg's results  
from our own. For example, we can deduce  the following clarification  of  \cite[Corollary 3.13]{kirch}:

\begin{cor} \label{ruled} 
Let $(M,g,\omega)$ be a compact almost-K\"ahler $4$-manifold with non-negative scalar curvature and harmonic  Weyl tensor:
$$s\geq 0, \qquad \delta W^+=\delta W^-=0.$$
 Then  
$(M^4,g,J)$ is either a K\"ahler-Einstein  manifold with $\lambda\geq 0$, or else  is locally symmetric, with 
universal cover $(\tilde{M}, \tilde{g})$
 isometric to the Riemannian product of two constant-curvature surfaces, where one factor is a $2$-sphere. \end{cor}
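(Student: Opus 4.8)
The plan is to build on Proposition \ref{lemon}, which already does most of the heavy lifting. Under the hypotheses $s \geq 0$ and $\delta W^+ = \delta W^- = 0$, the stronger condition $\delta W^- = 0$ is additional information beyond what Proposition \ref{lemon} uses. First I would invoke Proposition \ref{lemon} (whose hypotheses $s \geq 0 \Rightarrow s^* \geq 0$ are clearly met, since $s^* = \frac{s}{3} + 2W^+(\omega,\omega)$ and $W^+(\omega,\omega) \geq 0$ here) to conclude immediately that $(M,g,\omega)$ is a constant-scalar-curvature K\"ahler manifold, say with $s \equiv c \geq 0$. So the real content of the corollary is to upgrade a cscK metric satisfying the extra condition $\delta W^- = 0$ into one of the two very rigid structures in the conclusion.

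The key dichotomy should come from the sign of $s$. If $s \equiv 0$, then on a cscK surface with $s = 0$ one has the standard fact that scalar-flat K\"ahler is the same as anti-self-dual K\"ahler, so $W^+ \equiv 0$; combined with $\delta W^- = 0$ this gives a K\"ahler metric with harmonic Weyl tensor and $s = 0$, which forces the full curvature to be parallel (harmonic Weyl plus parallel Ricci — automatic here since $s$ is constant and the metric is K\"ahler-Einstein-like in the $s=0$ scalar-flat sense). I would then argue the metric is locally symmetric. For $s > 0$ constant, the plan is to show the metric must in fact be K\"ahler-Einstein: on a cscK $4$-manifold with $\delta W^- = 0$, the second Bianchi identity relates $\delta W^-$ to the anti-self-dual part of the trace-free Ricci tensor $\mathring{r}$, so $\delta W^- = 0$ together with $ds = 0$ forces $\mathring{r}$ (the primitive Ricci form) to be harmonic, and on a K\"ahler surface this is enough to make $\mathring{r}$ parallel. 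A parallel trace-free Ricci tensor on a K\"ahler surface either vanishes (giving K\"ahler-Einstein) or splits the holonomy, yielding the local product structure.

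Concretely, the main steps I would carry out in order are: (1) apply Proposition \ref{lemon} to get cscK with $s$ constant and $\geq 0$; (2) use the K\"ahler identity expressing $W^-$ and the Bianchi identity $\delta W^- = \tfrac12 \mathring{r} \cdot (\text{curl-type operator})$ together with $\delta W^- = 0$ and $ds = 0$ to deduce that the trace-free Ricci tensor is parallel, $\nabla \mathring{r} = 0$; (3) split into the Einstein case $\mathring{r} \equiv 0$, giving K\"ahler-Einstein with $\lambda \geq 0$, and the non-Einstein case, where the parallel nonzero $\mathring{r}$ reduces the holonomy and de Rham's theorem splits the universal cover as a Riemannian product; (4) identify the factors, using that on a K\"ahler surface a parallel trace-free Ricci tensor has eigenvalues forcing a product of two constant-curvature surfaces, and that $s \geq 0$ with the K\"ahler structure compatible with the product orientation forces at least one factor to be a round $2$-sphere.

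The hard part will be step (2): extracting parallelism of the Ricci tensor from the single scalar condition $\delta W^- = 0$ plus constant scalar curvature. This requires the precise K\"ahler-geometric identity that, in real dimension four, $\delta W^-$ is essentially the codifferential of the primitive part of the Ricci form, so that $\delta W^- = 0$ makes the primitive Ricci form a harmonic anti-self-dual $2$-form; one then needs a Weitzenb\"ock or integration argument (or the fact that on a compact K\"ahler surface a harmonic primitive $(1,1)$-form built from Ricci must be parallel when $s$ is constant) to promote "harmonic" to "parallel." Verifying this identity and the promotion step carefully is where I expect the genuine work to lie; everything downstream is standard de Rham splitting and constant-curvature-surface bookkeeping, with the requirement $s \geq 0$ pinning down which factor is the $2$-sphere.
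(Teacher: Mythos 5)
Your overall skeleton (get cscK from the earlier results, use $\delta W^-=0$ to make the trace-free Ricci tensor parallel, then split into the Einstein and de Rham--product cases) is the same as the paper's, but the mechanism you propose for the central step (2) is wrong, and this is a genuine gap rather than a presentational difference. On \emph{any} compact cscK surface the primitive part $\mathring{\rho}$ of the Ricci form is \emph{automatically} harmonic: $d\rho=0$ and $ds=0$ give $d\mathring{\rho}=0$, and a closed anti-self-dual form is co-closed, hence harmonic. So your translation ``$\delta W^-=0$ makes $\mathring{\rho}$ a harmonic anti-self-dual $2$-form'' extracts no information whatsoever from the hypothesis. Worse, the promotion ``harmonic $\Rightarrow$ parallel'' that you hope to obtain from a Weitzenb\"ock or integration argument is false on cscK surfaces: the relevant formula is $0=\nabla^*\nabla\mathring{\rho}+\frac{s}{3}\mathring{\rho}-2W^-(\mathring{\rho})$, and the term $W^-(\mathring{\rho},\mathring{\rho})$ has no favorable sign. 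Concretely, the scalar-flat K\"ahler metrics on blown-up ruled surfaces \cite{leblown,klp} are cscK with $s=0\geq 0$, yet are neither K\"ahler--Einstein nor locally products, so their $\mathring{\rho}$ is harmonic but not parallel; cscK blow-ups \`a la Arezzo--Pacard \cite{arpa1} give further examples. The same examples refute the assertion in your $s\equiv 0$ branch that ``parallel Ricci is automatic since $s$ is constant'': constant (even zero) scalar curvature on a K\"ahler surface does not give parallel Ricci --- that is exactly what must be proved, and it must use $\delta W^-=0$ essentially.

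The correct mechanism, which is the paper's, is pointwise and algebraic, not integral, and is uniform in the sign of $s$ (so your dichotomy on $\operatorname{sgn} s$ is unnecessary). Once $g$ is cscK, $W^+$ is the trace-free part of $\frac{s}{4}\,\omega\otimes\omega$ and hence parallel, so $\delta W^+=0$ holds automatically and the hypothesis upgrades to $\delta W=0$ for the full Weyl tensor. The contracted second Bianchi identity
\begin{equation*}
\nabla_{[c}r_{d]b}=\nabla_a{W^a}_{bcd}+\tfrac{1}{6}g_{b[c}\nabla_{d]}s
\end{equation*}
then shows, since $ds=0$, that $\nabla r$ is completely symmetric, i.e.\ $r$ is a Codazzi tensor. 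Now use the K\"ahler structure: $r$ is $J$-invariant, so its $(2,0)$- and $(0,2)$-components vanish identically, and since $\nabla$ preserves types on a K\"ahler manifold, total symmetry gives $\nabla_\kappa r_{\mu\bar{\nu}}=\nabla_{\bar{\nu}}r_{\mu\kappa}=0$ and $\nabla_{\bar{\kappa}}r_{\mu\bar{\nu}}=\nabla_\mu r_{\bar{\kappa}\bar{\nu}}=0$; hence $\nabla r=0$ everywhere. From there your steps (3) and (4) --- the dichotomy $\mathring{\rho}=0$ (K\"ahler--Einstein, with $\lambda\geq 0$ from $s\geq 0$) versus holonomy reduction to $U(1)\times U(1)$, de Rham splitting into two constant-curvature surfaces, and using $s\geq 0$ plus non-flatness to force one factor to be a sphere --- go through exactly as you describe.
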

 
\begin{proof}  Theorem \ref{clarify} tells us that $(M,g,J)$ is  K\"ahler and has constant scalar curvature. But since the entire Weyl tensor is actually  assumed to be harmonic,
the second Bianchi identity also  tells us that 
$$\nabla_{[c}r_{d]b} = \nabla_a{W^a}_{bcd}+ \frac{1}{6} g_{b[c}\nabla_{d]}s = 0,$$
so that the covariant derivative $\nabla r$ of the Ricci tensor must therefore be  completely symmetric. 
Decomposing  $\otimes^3 \Lambda^1_\CC$ into $\otimes^3 ( \Lambda^{1,0} \oplus \Lambda^{0,1})$,
we thus have 
$$\nabla_\kappa r_{\mu \bar{\nu}}= \nabla_{\bar{\nu}}r_{\mu \kappa} = 0 \qquad  \mbox{ and } 
\qquad \nabla_{\bar{\kappa}} r_{\mu \bar{\nu}}= \nabla_{\mu}r_{\bar{\kappa}\bar{\nu}} = 0.$$
The Ricci tensor of our K\"ahler manifold is therefore parallel, and the primitive part $\mathring{\rho}\in \Lambda^-$ of its Ricci form must therefore be parallel, too.
If $\mathring{\rho}=0$, $(M,g,J)$ is K\"ahler-Einstein. Otherwise,  the holonomy  of $(M,g)$ fixes both $\omega$ and $\mathring{\rho}$, and so    must be  contained in 
$U(1) \times U(1)\subset U(2) \subset SO(4)$. In the latter case, the de Rham splitting theorem \cite{bes} then implies that 
 the universal cover $(\tilde{M}, \tilde{g})$ of $(M,g)$ is   a  Riemannian product  
$(M_1,g_1)\times (M_2, g_2)$ of two complete, constant-curvature Riemann surfaces; and if $g$ is not Einstein, and therefore not flat, the assumption that $s\geq 0$ then forces 
at least one factor  $(M_j,g_j)$ to have positive Gauss curvature. 
\end{proof}

The Narasimhan-Seshadri theorem \cite{narsesh} provides a complete existence theory for  the non-Einstein  metrics of Corollary \ref{ruled}.
 Indeed,  a compact complex manifold   $(M^4,J)$  admits such a locally-product  
 K\"ahler metric  iff it is   a geometrically  ruled complex surfaces that    arises  as the projectivization of a 
polystable rank-$2$ holomorphic vector bundles  over  a compact complex curve. For related results,  see \cite{chrives,burbar,lpm}.

\bigskip
\noindent
{\bf Acknowledgments.} The author would like to thank Tedi Draghici for subsequently 
pointing out  some of his  own related work, 
 and  the anonymous referee for suggesting ways to streamline and clarify  the exposition. 
This work was supported in part by   NSF grant DMS-1205953. 

\pagebreak

\vfill 

\noindent 
{\sc Department of Mathematics, State University of New York, Stony Brook, NY 11794-3651 USA} 

\medskip 

\noindent 
{e-mail:} claude@math.sunyb.edu

\bigskip 

\noindent 
{\sc Keywords:} Einstein metric, Del Pezzo surface, Weyl curvature,  moduli space, harmonic form, K\"ahler, almost-K\"ahler, symplectic.

\bigskip 

\noindent 
{\sc MSC classification:}  53C25 (Primary),  14J26,  32J15, 53C55, 53D05.

\end{document}